\pgfplotsset{compat=1.16}
\newdimen\fwd
\newcommand{\Id}{{\mathrm{Id}}}
\newcommand{\LSy}{{\mathrm{LSy}}}
\newcommand{\LSp}{{\mathrm{LSp}}}
\newcommand{\Dy}{{\mathrm{Dp}}}
\newcommand{\Dp}{\Dy}
\newcommand{\Dz}{{\mathrm{Dz}}}
\newcommand{\Du}{{\mathrm{Du}}}
\newcommand{\GM}{{\mathrm{GM}}}
\newcommand{\FAIR}{{\mathrm{FAIR}}}
\newcommand{\LBFGS}{\mbox{$\ell$-BFGS}}
\newcommand{\ie}{\emph{i.e.}}
\newcommand{\eg}{\emph{e.g.}}
\newcommand{\R}{\mathbb{R}}
\newcommand{\eps}{\varepsilon}
\begin{document}
\title{Hessian Initialization Strategies for \LBFGS~Solving Non-linear Inverse Problems} 
\titlerunning{Hessian Initialization Strategies for \LBFGS}
%

\author{Hari Om Aggrawal \inst{1}
\and Jan Modersitzki \inst{1,2}} 
\authorrunning{H. O. Aggrawal et. al.}
\institute{Institute of Mathematics and Image Computing,
	University of L\"ubeck, Germany \and Fraunhofer Institute for Digital Medicine MEVIS, L\"ubeck, Germany \\
	\email{hariom85@gmail.com, modersitzki@mic.uni-luebeck.de}
}

\maketitle              
\begin{abstract}
	\LBFGS~is the state-of-the-art optimization method for many large scale inverse problems. It has a small memory footprint and achieves superlinear convergence. 
	The method approximates Hessian based on an initial approximation and an update rule that models current local curvature information. The initial approximation greatly affects the scaling of a search direction and the overall convergence of the method. 
	
	We propose a novel, simple, and effective way to initialize the Hessian. 
	Typically, the objective function is a sum of a data-fidelity term and a regularizer.
	Often, the Hessian of the data-fidelity is computationally challenging, but the regularizer's Hessian is easy to compute.
	We replace the Hessian of the data-fidelity with a scalar and keep the Hessian of the regularizer to initialize the Hessian approximation at every iteration. The scalar satisfies the secant equation in the sense of ordinary and total least squares and geometric mean regression.
	
	Our new strategy not only leads to faster convergence, but the quality of the numerical solutions is generally superior to simple scaling based strategies. Specifically, the proposed schemes based on ordinary least squares formulation and geometric mean regression outperform the state-of-the-art schemes.
	
	The implementation of our strategy requires only a small change of a standard \LBFGS~code. 
	Our experiments on convex quadratic problems and non-convex image registration problems confirm the effectiveness of the proposed approach. 

\keywords{inverse problem, optimization, quasi-newton, \LBFGS, Hessian initialization.}
\end{abstract}
%
%
%

\begin{section}{Introduction}

Many real-life problems fit the framework of an inverse problem. Fluorescence optical tomography \cite{Patil2019}, ultrasound tomography \cite{Bernhardt2020}, and photoacoustic tomography \cite{Saratoon2013} are just a few non-invasive imaging techniques that image a human body's internal structure by solving inverse problems. 

Inverse problems are typically ill-posed in nature \cite{Hansen2010}. The solution may not be unique and unstable with variations in the data due to unavoidable factors such as physical noise. Regularizing the problem with prior information, we obtain a solution by minimizing an objective function
\[
J:\R^n\to\R,\quad J(x)=D(x)+S(x),
\]
where $D$ denotes a data-fitting term and $S$ a regularizer. For many non-linear problems, the objective function is non-convex, and the main limitation is computationally demanding operations. Hence, an efficient optimization method to be designed that requires fewer evaluations of an objective function, its gradient and Hessian, and more occasional calls to a linear solver.

Numerous optimization schemes exist to solve these problems. Still, schemes that do not require more than first-order information are generally preferable. Hessian computation is usually expensive. 

Steepest-descent (SD) and quasi-newton methods are the most popular first-order methods. 
SD converges only linearly; hence super-linear convergent quasi-Newton methods such as Gauss-Newton (GN) schemes or the Broyden-class are preferable. 
The quasi-Newton method's key idea is to replace Hessian with an approximation that models the local curvature information. It leads to not only faster convergence but as well higher solution accuracy than simple gradient descent methods; see, e.g. \cite{Patil2019,Heldmann2006} and references therein.

The Hessian approximation in the GN method is based on linearizing a function that involves a matrix-vector product with a Jacobian matrix. For applications such as optical tomography \cite{Patil2019}, the Jacobians are generally dense, and hence per
iteration costs can be very high. Therefore, Broyden-class methods are preferred in practice. The most popular member is the limited-memory version of BFGS scheme (\LBFGS) for large scale inverse problems; see its application to recent work in ultrasound tomography \cite{Bernhardt2020} and image registration \cite{Knig2018}.

The Broyden-class works with approximations of the Hessian that are based on an initial approximation and an update rule that is typically based on current curvature information derived from a secant equation \eqref{eq:secant}. Setting $x=x_k$, $x'=x_{k+1}$, $p=x'-x$ and introduce $y:=\nabla J(x')-\nabla J(x)$, a Taylor expansion
\(
y=\nabla J(x')-\nabla J(x)\approx\nabla^2J(x)p
\)
motivates the so-called secant-equations~\cite[Chapter 2]{Nocedal2006} 
for Hessian $B$,
\begin{equation}\label{eq:secant}
B'\>p=y
\quad\mbox{or, for the inverse of $B$,}\quad	 
p=H'\>y.
\end{equation}

Based on an initial choice $H_0$, BFGS-schemes update the current approximation $H=H_k$ using a constrained and weighted least squares fit,
\[
H_{k+1}\in
\mathrm{argmin}\{|M-H_k|_F,\ M=M^{\top},\ \quad My_k=p_k\},
\]
where a weighted Frobenius-norm $|A|_F^2=\mathrm{trace}(WAWA^{\top})$ is used.
If the weight matrix satisfies secant equation $Wp=y$, one obtains a unique and scale-invariant solution for $H_{k+1}$ as a rank two update of the $H_k$,
\begin{equation}\label{eq:recurrence}
H_{k+1}= V_k^{\top}H_kV_k+ \alpha_k\> p_k p_k^{\top},
\quad\mbox{with}\quad
\alpha_k:=(y_k^{\top}p_k)^{-1},\quad
V_k:=I-\alpha\> y_k p_k^{\top}.
\end{equation}

For large-scale problems, a limited-memory version of BFGS (\LBFGS) is
used~\cite{Liu1989}. In \LBFGS, at most the last $\ell$ pairs are used.
More precisely, only pairs $(y_j,p_j)$ with 
$k_\ell:=\max\{1,k-\ell-1\}\le j\le k$ are used.
Formally, $H_{k+1}=M_{k+1}$ results from the modified recursion 
\begin{equation}\label{eq:lBFGSrecurrence}
M_{j+1}:=V_{j}^{\top}M_{j}V_j+\alpha_j\>p_jp_j^{\top},\quad 	
j=k_\ell,\ldots,k.
\end{equation}


The convergence depends on the quality of Hessian approximation which generally can
not be controlled. It has been observed numerically that a ``good'' initial
guess of the Hessian greatly affects the scaling of a search direction and convergence of the overall
scheme~\cite{Dener2019,Gilbert1989,Marjugi2013,Andrei2020}. Note that \LBFGS-method allows to re-initialize Hessian at every iteration. This opportunity provides a window to rescale the search direction and infuse more
information in the scheme.

The state-of-the-art strategy initialize the Hessian (or it's inverse) with a scaled identity matrix
\[
	H_0^k = \tau_kI.
\]
The scalar $\tau_k$ is computed at each iteration to satisfy the secant equation \eqref{eq:secant} in a ordinary least
square sense following the Oren–Luenberger scaling strategy \cite{Oren1982}. This results in two choices for scaling factor $\tau_k$, \ie,
\begin{equation}\label{eq:LSyp}
	\tau^\LSy_k=(y^{\top}p)/(y^{\top}y) \quad \mbox{and} \quad \tau^\LSp_k=(p^{\top}p)/(y^{\top}p).
\end{equation}
In practice, it has been observed that the factor $\tau_k^\LSy$ ensures a well-scaled search
direction and as a result, most of the iterations accept a steplength of
one~\cite{Gilbert1989}.

In this paper, we suggest to improve the quality of the initial Hessian
approximation by including computationally manageable parts from the
regularizer. More precisely, we suggest to use
\[
	B_0^k=\tau_kI+\nabla^2S,
\]
where $\nabla^2S$ denotes the Hessian of a not necessarily quadratic
regularizer. We derive four options for scaling factor $\tau_k$ based on ordinary
and total-least squares formulations, and geometric mean regression.

Our work is motivated by ideas in image registration~\cite{2009-FAIR,Heldmann2006} and molecular energy minimization \cite{Jiang2004}.
In~\cite{2009-FAIR,Heldmann2006}, the Hessian is initialized by a
positive-definite matrix $B_0=\tau I+A$, where $A$ is the Hessian of a
quadratic regularizer and constant. The parameter $\tau>0$ is chosen manually.
In \cite{Heldmann2006}, it is reported that this strategy outperforms the
simple scaling approach. 
In \cite{Jiang2004}, the proposed strategy is similar to ours but requires expensive incomplete Cholesky factorization at each iteration to ensure positive-definiteness of Hessian approximation. 
Moreover, the value for $\tau$ is heuristically defined. But, in this paper, we show that it satisfies secant equation in a sense of geometric mean regression.

We assume the regularization part to be computationally manageable. Typical
examples include $L_2$-norm based Tikhonov regularizers
\cite{2009-FAIR,Hansen2010}, smooth total-variation norm \cite{Vogel2002}, or,
more generally, quadratic forms of derivative based regularization. Here,
$R(x)=\|Bx\|_{L_2}$ and $B$ is a linear differential operator.
Non-quadratic forms such as the hyperelastic regularizer~\cite{Burger2013} also
fit into this class. 

Our new strategy is easy to integrate into an \LBFGS~code. Only the Hessian
initialization routine needs to be changed, all other parts remain unchanged.

In this paper, we also demonstrate on various test cases that the proposed approach achieves fast convergence and improves the solution accuracy compared to the standard scaling based approaches. Our test cases include convex quadratic problems and non-convex image registration problems with both, quadratic and non-quadratic regularization. Due to the page limitation, the theoretical investigation will be a part of the extended version of this paper.

In Sec.~\ref{Sec:newInitMethod}, we derive four scaling factors for the proposed initialization strategy and present a practical algorithm. We report the numerical experiments with results in Sec.~\ref{Sec:experiments}. In Sec.~\ref{Sec:conclusion}, we conclude our findings.

\end{section}
\begin{section}
	{Proposed Hessian Initialization Strategy and Algorithm}%
	\label{Sec:newInitMethod}

As common for inverse problems, we assume that the objective function $J$ is a
sum of a data fitting term $D$ and a regularizer $S$. Hence
\begin{equation}\label{eq:d2J}
	\nabla^2 J = \nabla^2D+\nabla^2S,
\end{equation}
where $A_k:=\nabla^2S(x_k)$ is symmetric positive semidefinite (SPSD). We
assume that $A_k$ is ``easy'', i.e. has low memory requirements and $A_kx=b$
can be solved efficiently. Problems may occur from the data fitting part
$\nabla^2D(x_k)$, which might be computational complex and potentially
ill-conditioned.

In the proposed strategy, we suggest to approximate $\nabla^2D(x_k)\approx
\tau_kI$, where $\tau_k$ is a tuning parameter to be determined. Hence
\begin{equation}\label{B0approx}
	B_0^k = \tau_k I + A_k.
\end{equation}
The role of $B_0^k$ is to mimic the Hessian at least for the current update. 
In this regard, we aim to satisfy the secant equation~\eqref{eq:secant} in some least squares sense, where now
\begin{equation} \label{eq:Bk0Secant}
	y=B_0^k\>p=\tau_k\>p+A_kp
	\iff \tau_kp-z=0,\quad
	z:=y-A_kp.
\end{equation}
Since $B_0^k$ is required to be symmetric positive definite (SPD), we have
$\tau_k\ge\tau_{\min}:=\eps-\mu_{\min}$,
where $\eps>0$ is a small tolerance, typically $\eps=10^{-6}$ and $\mu_{\min}$ is the smallest eigenvalue of $A_k$. This adds a constraint to the least squares problems. First we summarize the ordinary least square approach; cf.~Lem.~\ref{lem:ls}. We removed subscript $k$ for clarity.

\begin{lemma}\label{lem:ls}
	Let $p,z\in\R^n$ with $p^{\top}z\ne0$ and $\tau_{\min}\in\R$. 
	Then
	\begin{equation*}
	\tau^\Dy :=\max\{(p^{\top}z)/(p^{\top}p),\tau_{\min}\}
	\mbox{ and }
	\tau^\Dz :=\max\{(z^{\top}z)/(p^{\top}z),\tau_{\min}\}
	\end{equation*}
	are optimal scaling parameters resulting from a minimization 
	of $|\xi u-v|$ subject to $\xi\ge\tau_{\min}$, where
	$(u,v)=(p,z)$ for (Dp) and $(u,v)=(z,p)$ for (Dz).
	
	Moreover, it holds $|\tau^\Dy|\le|\tau^\Dz|$.
\end{lemma}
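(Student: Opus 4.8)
The plan is to recognise each of the two problems as a one-dimensional least squares fit, to solve its constrained version by the elementary geometry of a scalar parabola, and then to compare the two resulting magnitudes via the Cauchy--Schwarz inequality together with a short clipping argument. Throughout, note that $p^{\top}z\ne 0$ forces $p\ne 0$ and $z\ne 0$, so every quotient below is well defined.

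For (Dp) we minimise, equivalently, the square $g(\xi):=\|\xi p-z\|^{2}=\xi^{2}\|p\|^{2}-2\xi\,(p^{\top}z)+\|z\|^{2}$ over $\xi\ge\tau_{\min}$. Since $\|p\|^{2}>0$, $g$ is a strictly convex parabola with vertex $\xi^{\star}=(p^{\top}z)/(p^{\top}p)$, decreasing left of $\xi^{\star}$ and increasing right of it; hence its minimiser over $[\tau_{\min},\infty)$ is $\xi^{\star}$ if $\xi^{\star}\ge\tau_{\min}$ and $\tau_{\min}$ otherwise, i.e. $\max\{\xi^{\star},\tau_{\min}\}=\tau^{\Dy}$. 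For (Dz) the secant relation $\tau p=z$ is, for $\tau\ne 0$, the same as $p=\tau^{-1}z$, so one fits the reciprocal scaling $\sigma:=\tau^{-1}$ by minimising $\|p-\sigma z\|^{2}$; this is a strictly convex parabola in $\sigma$ (as $\|z\|^{2}>0$) with minimiser $\sigma^{\star}=(z^{\top}p)/(z^{\top}z)\ne 0$, so the associated scaling is $1/\sigma^{\star}=(z^{\top}z)/(p^{\top}z)$, and projecting it onto $[\tau_{\min},\infty)$ gives $\tau^{\Dz}=\max\{(z^{\top}z)/(p^{\top}z),\tau_{\min}\}$. Equivalently, this is the minimisation of $|\xi u-v|$ with $(u,v)=(z,p)$ followed by reciprocating the optimal $\xi$.

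For the last assertion, write $\hat\tau^{\Dy}:=(p^{\top}z)/(p^{\top}p)$ and $\hat\tau^{\Dz}:=(z^{\top}z)/(p^{\top}z)$ for the unclipped values. Their product is $\hat\tau^{\Dy}\,\hat\tau^{\Dz}=\|z\|^{2}/\|p\|^{2}>0$, so both carry the sign of $p^{\top}z$, and Cauchy--Schwarz $(p^{\top}z)^{2}\le\|p\|^{2}\|z\|^{2}$ gives $|\hat\tau^{\Dy}|=|p^{\top}z|/\|p\|^{2}\le\|z\|^{2}/|p^{\top}z|=|\hat\tau^{\Dz}|$. It then suffices to observe that the clipping map $t\mapsto\max\{t,\tau_{\min}\}$, applied to two equally-signed reals $r,s$ with $|r|\le|s|$, preserves $|r|\le|s|$: in each of the sign cases $r,s>0$ and $r,s<0$ this follows by splitting into $\tau_{\min}\le\min\{r,s\}$, $\tau_{\min}$ lying between $r$ and $s$, and $\tau_{\min}\ge\max\{r,s\}$. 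Applying this to $(r,s)=(\hat\tau^{\Dy},\hat\tau^{\Dz})$ yields $|\tau^{\Dy}|\le|\tau^{\Dz}|$.

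I expect the clipping step to be the only delicate point. Because $\tau_{\min}=\eps-\mu_{\min}$ need not be positive, the constraint cannot simply be dropped, and it is precisely the same-sign property of $\hat\tau^{\Dy}$ and $\hat\tau^{\Dz}$ — forced by the positive product $\|z\|^{2}/\|p\|^{2}$ — that prevents the magnitude comparison from being reversed when one of the raw values gets replaced by $\tau_{\min}$.
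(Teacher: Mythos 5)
Your proof is correct and follows essentially the same route as the paper: elementary calculus for the unconstrained scalar least‑squares problem, projection onto $[\tau_{\min},\infty)$ for the constraint, a rescaling/reciprocation argument for (Dz), and Cauchy--Schwarz for the magnitude comparison. Your only addition is the explicit verification that the clipping map preserves the inequality for equally‑signed values when $\tau_{\min}$ may be negative --- a point the paper's one‑line appeal to Cauchy--Schwarz silently skips --- and that case analysis is sound.
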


\begin{proof}
	For (Dp): the unique minimizer $\xi$ of the unconstrained problem follows from the basic calculus. If $\xi<\tau_{\min}$, the minimum is attained on the
	boundary. For (Dz): the result follows from rescaling.
	The inequality follows the Cauchy-Schwarz-inequality
	$|p^{\top}z|^2\le(p^{\top}p)(z^{\top}z)$.
\end{proof}

The above choices have a preference either for the $p$ or $z$ direction.
A total least squares approach can be used for an unbiased approach;
cf.~Lem.~\ref{lem:tls}.

\begin{lemma}\label{lem:tls}
	Let $p,z\in\R^n$ with $\delta:=p^{\top}z\ne0$, $\tau_{\min}\in\R$, Then
	\[
	\tau^\Du
	:=\max\{(|z|^2-\lambda)/\delta,\tau_{\min}\}, \quad \lambda=(|p|^2+|z|^2-\sqrt{(|p|^2-|z|^2)^2+4\delta^2})/2,
	\]
	is an optimal scaling parameter from the rescaling of minimizer $\eta = [\eta_1,\eta_2]$ of the total least squares formulation $|\eta_1 p-\eta_2z|$ subject to $|\eta|=1$.
	With $\tau^\Dp$ and $\tau^\Dz$ as in  Lem.~\ref{lem:ls}, it holds $|\tau^\Dp|\le|\tau^\Du|\le|\tau^\Dz|$.
\end{lemma}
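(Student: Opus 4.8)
The plan is to read the constrained total least squares problem as a smallest-eigenvalue problem for a $2\times2$ matrix. Collecting the two vectors into $A:=[\,p\mid -z\,]\in\R^{n\times2}$ we have $\eta_1p-\eta_2z=A\eta$, so minimizing $|\eta_1p-\eta_2z|$ subject to $|\eta|=1$ is the same as minimizing the Rayleigh quotient of the symmetric positive semidefinite matrix
\[
A^{\top}A=\begin{pmatrix}|p|^2&-\delta\\-\delta&|z|^2\end{pmatrix}
\]
over the unit sphere. Hence the minimum value is the smallest eigenvalue of $A^{\top}A$ and the minimizer $\eta$ is a corresponding unit eigenvector; since $\delta\ne0$ the discriminant $(|p|^2-|z|^2)^2+4\delta^2$ is strictly positive, so the two eigenvalues are distinct and $\eta$ is unique up to sign. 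The standard $2\times2$ eigenvalue formula then produces precisely the stated $\lambda$.

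Next I would recover the scaling factor from the eigenvector. The second component of $(A^{\top}A-\lambda I)\eta=0$ reads $-\delta\,\eta_1+(|z|^2-\lambda)\,\eta_2=0$, and $\eta_2\ne0$ (otherwise $\delta\eta_1=0$ forces $\eta_1=0$ too, contradicting $|\eta|=1$), so $\eta_1/\eta_2=(|z|^2-\lambda)/\delta$. Rescaling the relaxed secant relation $\eta_1p=\eta_2z$ back to the form $\tau p=z$ means taking $\tau=\eta_1/\eta_2$, which gives $(|z|^2-\lambda)/\delta$; enforcing the SPD constraint $\tau\ge\tau_{\min}$ by truncation yields $\tau^\Du$.

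For the ordering I would argue on the untruncated values $\delta/|p|^2$, $(|z|^2-\lambda)/\delta$ and $|z|^2/\delta$, using two elementary facts about $A^{\top}A$: the diagonal entries lie between its eigenvalues, $\lambda=\lambda_-\le\min(|p|^2,|z|^2)\le\max(|p|^2,|z|^2)\le\lambda_+$, and $\lambda_-\lambda_+=\det A^{\top}A=|p|^2|z|^2-\delta^2\ge0$ by Cauchy--Schwarz. The inequality $|\tau^\Du|\le|\tau^\Dz|$ is then immediate from $\lambda\ge0$ (note $|z|^2-\lambda\ge0$). For $|\tau^\Dp|\le|\tau^\Du|$, clearing the common denominator $|\delta|$ reduces the claim to $|p|^2\lambda\le|p|^2|z|^2-\delta^2=\lambda_-\lambda_+$, which holds because $|p|^2\le\lambda_+$ when $\lambda=\lambda_->0$ and reduces to Cauchy--Schwarz when $\lambda=0$. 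Finally, since all three raw values share the sign of $\delta$ and $t\mapsto|\max\{t,\tau_{\min}\}|$ is monotone in $|t|$ once that sign is fixed, the $\max$-truncation preserves the ordering, giving $|\tau^\Dp|\le|\tau^\Du|\le|\tau^\Dz|$ for the values as defined.

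The computational parts --- the $2\times2$ eigendecomposition and clearing denominators --- are routine. The points that need genuine care are (i) justifying that the constrained total least squares minimizer is exactly the bottom eigenvector and is unique up to sign, which is where $\delta\ne0$ enters, and (ii) the sign and truncation bookkeeping in the final inequality chain, because $\delta$ and hence all three raw scaling values may be negative. I expect (ii) to be the main, albeit minor, obstacle.
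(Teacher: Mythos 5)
Your proposal is correct and follows essentially the same route as the paper: both reduce the constrained TLS problem to the smallest eigenvalue of the $2\times2$ Gram matrix $[p,-z]^{\top}[p,-z]$, recover $\tau^\Du$ as the eigenvector component ratio $(|z|^2-\lambda)/\delta$, and derive the inequality chain from $\lambda\ge0$ together with the determinant identity $(|z|^2-\lambda)(|p|^2-\lambda)=\delta^2$. Your handling of the sign and truncation bookkeeping at the end is in fact slightly more careful than the paper's, which passes from the raw inequalities to the truncated ones without comment.
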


\begin{proof}
	We have the necessary condition of first order $(U^{\top}U-\mu I)\eta=0$ subject to  $|\eta|=1$, where $U=[p,-z]$ and $\mu$ denotes the Lagrange-multiplier. This indicates that $\mu$ is the smallest eigenvalue of the symmetric 2-by-2 matrix $U^{\top}U$ with diagonal elements $|p|^2$ and $|z|^2$ and off diagonal $-\delta$. Hence, $\mu=\lambda$ and $\eta:=v/|v|$ is a normalized version of the associated eigenvector~$v=(\delta,|p|^2-\lambda)$. The value for $\tau^\Du$ follows from proper scaling.
	
	To show the inequality, we use the relationship $(|z|^2-\lambda)/\delta = \delta/(|p|^2 - \lambda)$ derived from $\det(U^{\top}U-\mu I) = 0$. Since $U^{\top}U$ is SPSD, we know $\lambda\geq0$. Hence, the inequalities $|(|z|^2-\lambda)/\delta| \leq |z|^2/|\delta|$ and $|\delta|/|p|^2 \leq |\delta/(|p|^2 - \lambda)|$ satisfies. It leads to $|\tau^\Dp|\le|\tau^\Du|\le|\tau^\Dz|$ following the definition of $\tau^\Dp$, $\tau^\Dz$, and $\tau^\Du$.
\end{proof}

Geometric mean regression is an another unbiased approach; see \cite{Draper1991} for details. The optimal scaling parameter is defined as the geometric mean of scaling parameters obtained from ordinary least squares problems in Lem.~\ref{lem:ls}, \ie,
\begin{equation}\label{eq:GM}
	\tau^\GM = \max\{(\tau^\Dp\tau^\Dz)^{1/2},\tau_{\min}\} = \max\{(|z|^2/|p|^2)^{1/2},\tau_{\min}\}
\end{equation}
and follows $|\tau^\Dp|\le \tau^\GM \le|\tau^\Dz|$.

\subsubsection{Remarks on scaling parameters:} 
Note that, the tuning of parameter $\tau$ changes both the angle and length of search direction, whereas the simple scaling based schemes $\tau^\LSy$ and $\tau^\LSp$ majorly changes the length of the search direction.

To achieve fast convergence, we aim to reduce the number of iterations and the line-search steps at every iteration. For that, we seek a search direction that is closer to the Newton direction and take fewer iterations to convergence. Furthermore, we seek well-scaled search directions that satisfy steplength equal to one and avoid any line-search steps for reducing the total run-time.

For a simple quadratic problem, we observe that the search directions with the proposed choices for $\tau$ behave almost in a similar fashion with respect to the Newton direction. Hence, all options are practically equivalent. 

But, in practice, we observe that the length of a search direction is inversely proportional to the value of $\tau$ for our schemes.
Hence, a small $\tau$ leads to a long step. Although it is desirable, but an overestimated length leads to many line-search steps. On the other hand, with a large $\tau$, we take small steps and, as results, require many iterations for convergence; see results in Sec.~\ref{Sec:experiments}. These facts suggest an optimal scaling factor, but the exact criterion are so far unknown to us. Nevertheless, we provide four choices for $\tau$ covering a wide range and describe their inter-relationship in Lem.~\ref{lem:tls} and \eqref{eq:GM}.

\subsubsection{Practical algorithm:} Now, we are ready to present pseudo code for the standard \LBFGS~algorithm with the proposed Hessian initialization strategies where we motivate to initialize Hessian $B_0^k$ at every iteration with \eqref{B0approx}; see Algo.~\ref{algo:lbfgs} \cite{Nocedal2006}. In the standard \LBFGS~code, we only need to change the Hessian initialization routine; see Line 4 in Algo.~\ref{algo:lbfgs}; with a few lines of code described in Algo.~\ref{algo:Hinit}.

\begin{algorithm2e}[t]\label{algo:lbfgs}
	\caption{Standard \LBFGS~algorithm with the proposed Hessian initialization strategy} 
	\SetAlgoLined
	Initialize a starting guess $x_0$, integer $\ell > 0$, and $\eps > 0$\;
	$k \longleftarrow 0$\;
	\Repeat{convergence}{
		Compute $B_0^k$ following steps in Algo. \ref{algo:Hinit}\;
		Compute search direction $d_k \longleftarrow - H_k \nabla J(x_k)$ using $B_0^k$ in the two-step recursive algorithm based on \eqref{eq:lBFGSrecurrence}; see details in \cite{Liu1989}\;
		Compute $x_{k + 1} \longleftarrow x_k + \alpha_k d_k$ where $\alpha_k$ is obtained with a line-search algorithm\;
		\If{$k > \ell$}{
			Discard the vector pair \{$p_{k-\ell}$,$y_{k-\ell}$\} from the storage\;
		}
		Compute and save $p_k \longleftarrow x_{k+1} - x_k$ and $y_k = \nabla J(x_{k+1}) - \nabla J(x_k)$\;
		$k \longleftarrow k + 1$\;
	}
\end{algorithm2e}

\begin{algorithm2e}[t]\label{algo:Hinit}
	\caption{Secant equation based Hessian initialization strategies} 
	\SetAlgoLined
	Compute $A_k$, Hessian of regularizer at $x_k$\;
	Set $\tau_{\min} \longleftarrow \eps$\;
	\eIf{first iteration ($k = 0$)}{
		Set $\tau_k \longleftarrow \tau_{\min}$\;
	}{
		Compute $z_k \longleftarrow y_k - A_kp_k$\;
		Set $\tau_k$ to either $\tau_k^\Dy$, $\tau_k^\Du$, $\tau_k^\Dz$, or $\tau_k^\GM$\;		
	}
	Initialize $B_0^{k} \longleftarrow \tau_k I + A_k$\;
\end{algorithm2e}

To initialize $B_0^k$, we start with setting $\tau_{\min}$, computing the Hessian of regularizer, and evaluating $z_k$. The parameter $\tau_k$ can be set with either $\tau_k^\Du$, $\tau_k^\Dy$, $\tau_k^\Dz$, or $\tau_k^\GM$.

In the first iteration, we can not compute $z_k$ due to the lack of information on the required iterates. Hence, initially, we set $\tau = \eps$ in our experiments. Other initialization options, \eg, based on the norm of a gradient \cite{Nocedal2006}, are also possible. 

Recall that the parameter $\tau_k$ should be greater than $\tau_{\min} = \eps - \mu_{\min}(A_k)$ to ensure the positive-definitness of Hessian $B_0^k$. To determine $\tau_{\min}$, we need the smallest eigenvalue of $A_k$ that could be computationally expensive operation for large scale problems. Hence, we avoid the eigenvalue computation in practice and set $\tau_{\min} = \eps = 10^{-6}$ in our experiments.

\end{section}

\begin{section}{Numerical Experiments and Results}\label{Sec:experiments}

\newcommand{\ig}[1]{}
\newcommand{\tol}{\varepsilon}
\fwd=18mm

We report on the performance of the proposed Hessian initialization strategies for typical inverse problems:
\textbf{a) Strictly convex quadratic problems:} This class is chosen to
validate the convergence properties of the proposed strategy numerically.
\textbf{b) Non-convex image registration problems:} This class is chosen to show the effectiveness of the proposed strategy on a few challenging real-world problems.

\renewcommand{\arraystretch}{1.2}
\begin{table}[t]
	\caption{Optimization methods used for evaluations. $A_k$ be the Hessian of regularizer.}
	\setlength{\tabcolsep}{2pt}
	\centering
	\begin{tabular}{cccc}
		\toprule
		No. & Optimization methods & Hessian initialization & References\\
		\midrule
		1.   & Steepest descent (SD)   &    -                                                                  &  see \cite{Nocedal2006}   \\
		2.  & \LBFGS         					& $H_k^0= \tau_k I $           			&  $\tau_k = I$, $\tau_k^\LSy$, or $\tau_k^\LSp$; see \eqref{eq:LSyp}    \\
		3.  & \LBFGS~(FAIR scheme)    & $B_k^0~= \tau I + A_k $ & $\tau$ is set manually; see \cite{2009-FAIR} \\
		4.  & \LBFGS~(proposed)        & $B_k^0~= \tau_k I + A_k $   &  
		\makecell{
		$\tau_k = \tau_k^\Dp$, $\tau_k^\Dz$, $\tau_k^\Du$, or $\tau_k^\GM$; \\
		see Lem.~\ref{lem:ls}, Lem.~\ref{lem:tls}, and \eqref{eq:GM}}         \\
		5.  & Gauss-Newton (GN)      &  -                                                                    		& see \cite{Nocedal2006} \\
		\bottomrule    
	\end{tabular}
	\label{tab:optmethods}
\end{table}
\renewcommand{\arraystretch}{1} 

We investigate in total eight Hessian initialization strategies for $\LBFGS$ method; see Tab.~\ref{tab:optmethods} for details. Along with \LBFGS, we also report results with Gauss-Newton (GN) and steepest descent (SD) method. GN is a widely used method in the field of image registration. GN may achieve quadratic convergence close to the solution. Even though, GN may converge to a local optimal point in a few iterations, but for large scale problems, per iteration cost for GN could be very high due to additional matrix-vector products with Jacobian; see run-time for GN in Tab.~\ref{tab:IR_Results} for image registration problems. On the other hand, SD follows a linear and \LBFGS~a superlinear convergence. 

Note that, the methods $\Dy$, $\Du$, $\Dz$, $\GM$, FAIR, and GN solve a linear system at each iteration to compute the search direction. For that, we use Jacobi preconditioned conjugate gradient (PCG) method. Moreover, the associated system matrices are not stored, rather matrix-vector product has been computed directly. We run PCG until the relative residual is less than $10^{-6}$ or the maximum iterations reach to 100. The iteration count is set to low with the purpose of reducing extra computational time at each iteration due to the linear solver. 

The Hessian initialization in the FAIR \cite{2009-FAIR} is similar to ours. But, they set manually the parameter $\tau = 10^{-3}c$, where $c$ is the first diagonal element of $A$.

We use Armijo backtracking line-search algorithm to estimate the step-size. As noted in \cite{Nocedal2006}, if curvature condition is not satisfied at any iteration, we skip the Hessian update. For stopping criteria, we follow \cite[p.~78]{2009-FAIR}  
and set $\tol_J = 10^{-5}$, $\tol_W = 10^{-1}$, and $\tol_G = 10^{-2}$. 
For \LBFGS, we use the standard choice $\ell = 5$. 

Run-time and solution accuracy are our main criteria to evaluate the performance of optimization methods.

\begin{subsection}{Quadratic problem}\label{Sec:ImgDeBlur}

\newcolumntype{R}[1]{>{\raggedleft\arraybackslash }b{#1}}
\newcolumntype{L}[1]{>{\raggedright\arraybackslash }b{#1}}
\def\cw{0.87cm}
\pgfplotstableset{mystyle/.style={
		col sep=comma,
		string type,
		columns/LS/.style={column name={}, column type=L{0.48cm}, string type},
		columns/optMethods/.style={column name= {S./$\ell$}, column type=l, string type},
		columns/l1/.style={column name= {$1$}, column type=R{\cw}, string type},
		columns/l3/.style={column name= {$3$}, column type=R{\cw}, string type},
		columns/l5/.style={column name= {$5$}, column type=R{\cw}, string type},
		columns/l10/.style={column name= {$10$}, column type=R{\cw}, string type},
		columns/linf/.style={column name= {$\infty$}, column type=R{\cw}, string type},
		every last row/.style={after row=\bottomrule},
}}


\pgfplotstableset{markstyle/.append style={
		postproc cell content/.append style={
			/pgfplots/table/@cell content/.add={\cellcolor{black!20!white}}{},
		}
}}

\begin{table}[t]
	\caption{
		Optimization results for quadratic problem with eight Hessian initialization strategies (S). Iteration counts and average line-searches (LS) per iteration are mentioned for weakly ($\alpha = 10^{-5}$), mildly ($\alpha = 10^{-3}$), and strongly ($\alpha = 10^{-1}$) regularized problems with $\ell = 1, 5, 10$, and $\infty$. 
	}
	\setlength{\tabcolsep}{0.1pt}
	\renewcommand{\arraystretch}{1}
	\centering
	\begin{minipage}{0.4\columnwidth}
		\pgfplotstabletypeset[
		mystyle,
		create on use/LS/.style={
			create col/set list={\multirow{8}{*}{\rotatebox{90}{Iterations}}},
		},
		columns={LS,optMethods,l1,l5,l10,linf},
	    every head row/.style={%
			before row={\toprule 
				& & \multicolumn{4}{c}{$\alpha = 10^{-5}$}\\
				\cmidrule{2-6}},
			after row={
				\midrule},
		},
		]{data/quadraticPrb_alpha1e-05_iter.csv}		
	\end{minipage}
     \begin{minipage}{0.29\columnwidth}
     	\pgfplotstabletypeset[
     	mystyle,
     	columns={l1,l5,l10,linf},
     	every head row/.style={%
     		before row={\toprule 
     			\multicolumn{4}{c}{$\alpha = 10^{-3}$}\\
     			\cmidrule{1-4}},
     		after row={
     			\midrule},
     	},
     	]{data/quadraticPrb_alpha1e-03_iter.csv}		
     \end{minipage}
    \begin{minipage}{0.29\columnwidth}
    	\pgfplotstabletypeset[
    	mystyle,
    	columns={l1,l5,l10,linf},
    	every head row/.style={%
    		before row={\toprule 
    			\multicolumn{4}{c}{$\alpha = 10^{-1}$}\\
    			\cmidrule{1-4}},
    		after row={
    			\midrule},
    	},
    	]{data/quadraticPrb_alpha1e-01_iter.csv}		
    \end{minipage}
	\begin{minipage}{0.40\columnwidth}
		\pgfplotstabletypeset[
		mystyle,  		
		create on use/LS/.style={
			create col/set list={\multirow{8}{*}{\rotatebox{90}{avg. LS per iter.}}},
		},
		every head row/.style={ 
			output empty row,
		},
		columns={LS,optMethods,l1,l5,l10,linf},
		]{data/quadraticPrb_alpha1e-05_avgLS.csv}		
	\end{minipage}
	\begin{minipage}{0.29\columnwidth}
		\pgfplotstabletypeset[
		mystyle,  		
		every head row/.style={ 
			output empty row,
		},
		columns={l1,l5,l10,linf},
		]{data/quadraticPrb_alpha1e-03_avgLS.csv}		
	\end{minipage}
	\begin{minipage}{0.29\columnwidth}
		\pgfplotstabletypeset[
		mystyle,  		
		columns/ITER5/.style={column name= {$\infty$}, column type=R{\cw}, string type},
		every head row/.style={ 
			output empty row,
		},
		columns={l1,l5,l10,linf},
		]{data/quadraticPrb_alpha1e-01_avgLS.csv}		
	\end{minipage}
	\label{tab:quadPrb}
\end{table}

We minimize a strictly convex quadratic function $0.5(x - c)^\top(D + \alpha R)(x - c)$ that has the unique minimizer at $x^{*} = c \in \R^n$ with $D$ and $R$ be a symmetric and positive-definite matrix and $\alpha > 0$. In experiments, $D$ is a diagonal matrix with exponentially decaying eigenvalues, \ie, $D_{ii} = \exp(-i)$. It is a highly ill-conditioned matrix with condition number of order $10^6$ reflecting Hessian of a typical data-fidelity term in inverse problems. The regularization matrix $R$ be a well-known Laplacian matrix with zero boundary conditions. The regularization parameter $\alpha$ controls the ill-conditioning of the quadratic function. Here, we investigate weakly ($\alpha = 10^{-5}$), mildly ($\alpha = 10^{-3}$), and strongly ($\alpha = 10^{-1}$) regularized problems; see results in Tab.~\ref{tab:quadPrb}.

The iterations start with $x$ be a zero vector. The iterations stop when either the relative error $\|x-x^{*}\|/\|x^{*}\| \leq 10^{-5}$ or the iteration count reaches 5000.

As expected, the highly ill-conditioned problem, \ie, weakly regularized, requires many iterations for convergence. Especially, if the local curvature is not well-estimated; see results for $\ell = 1$ in Tab.~\ref{tab:quadPrb}. 
The iteration counts are decreasing with increasing regularization levels and with improving Hessian approximation that is increasing $\ell$.
Note that this behavior is consistent across all Hessian initialization strategies taken into consideration in this work. 

Identity initialized Hessian scheme converges very slow. But mostly, it satisfies step-length equal to one, which means it takes tiny steps at each iteration. 

In most cases, the Hessian initialization schemes equipped with regularization require fewer iterations than the simple scaling based $\LSy$ and $\LSp$ schemes. In particular, the $\FAIR$ scheme takes the lowest iterations, but search-directions are badly scaled. Hence, line-searches (LS) per iteration are much higher than the other schemes. Moreover, LS steps highly depend on the regularization level. 

But, for the proposed four schemes, the LS steps depend on the goodness of Hessian approximation, \ie, the value of $\ell$ rather than the regularization level. In practice, we generally work with a fixed $\ell$ and adjust the regularization level as per the need. Hence, the proposed scheme suits better for such a scenario. In particular, the $\Dz$ scheme generally take 1.15 LS steps per iteration and does not depend much on $\ell$. The $\Dp$ and $\Du$ schemes require higher LS steps than $\Dz$ whereas the $\GM$ between the $\Dp$ and the $\Dz$. In terms of iterations, we observe an almost inverse relationship; \eg, the $\Dp$ and $\Du$ scheme take fewer iterations than $\Dz$; follow the discussion in Sec.~\ref{Sec:newInitMethod} for the underlying reason.

\end{subsection}

\begin{subsection}{Image Registration}

\renewcommand{\arraystretch}{1}
\begin{table}[t]
	\caption{Image registration test problems (TP) for the performance evaluation of optimization strategies. For TP-4, the initial TRE is not available (N.A.).}
	\centering
	\setlength{\tabcolsep}{1.1pt}
	\begin{tabular}{ccccccc}
		\toprule
		TP & Dataset & Problem size & \makecell{Data- \\ Fidelity} & Regularizer & Parameters & Initial TRE\\
		\midrule
		1     & Hand (2D) & $2\times128\times128$	& SSD		& Curvature & $\alpha = 1.5 \times 10^3$ & 1.04 (0.62)\\
		2     & Hand (2D) & $2 \times 128 \times 128$	& MI		& Elastic & $\alpha = 5 \times 10^{-3}$  & 1.04 (0.62)\\
		3     & Lung (3D) & $3 \times 64 \times 64 \times 24$	& NGF		& Curvature & $\alpha = 10^2$ & 3.89 (2.78) \\
		4     & Disc-C (2D) &	$2 \times 16 \times 16$	  & SSD       & Hyperelastic & $\alpha = (100,20)$ & N.A. \\
		\bottomrule    
	\end{tabular}
\label{tab:IRproblems}
\end{table}
\renewcommand{\arraystretch}{1} 

\newcolumntype{R}[1]{>{\raggedleft\arraybackslash }b{#1}}
\newcolumntype{L}[1]{>{\raggedright\arraybackslash }b{#1}}


\pgfplotstableset{mystyle/.append style={
		col sep=comma,
		string type,
		columns/distance/.style={column name={}, column type=l, string type},
		columns/optMethods/.style={column name= {M.}, column type=l, string type},
		columns/iter/.style={column name= {iter}, column type=R{0.7cm}, string type},
		columns/feval/.style={column name= {feval}, column type=R{0.7cm}, string type},
		columns/redt/.style={column name= {$\frac{J(\phi)}{J(\phi_0)}$}, column type=R{0.8cm}, string type},
		columns/time/.style={column name= {{time}}, column type=R{1.0cm}, string type},
		columns/speedup/.style={column name= {{speedup}}, column type=R{1.1cm}, string type},
		columns/tre/.style={column name={TRE}, column type=R{1.6cm}, string type},
		create on use/tre/.style={
			create col/assign/.code={%
				\edef\entry{\thisrow{mTRE}  (\thisrow{sTRE})}%
				\pgfkeyslet{/pgfplots/table/create col/next content}\entry
			}
		},
		every last row/.style={after row=\bottomrule},
}}

\pgfplotstableset{markstyle/.append style={
		postproc cell content/.append style={
			/pgfplots/table/@cell content/.add={\cellcolor{black!20!white}}{},
		}
}}


\begin{table}[t]
	\caption{
	Optimization results for four image registration test problems (TP). The iteration counts (iter), the function evaluations (feval), the reduction in objective function $\frac{J(\phi)}{J(\phi_0)}$, the average run-time in seconds, and the mean and standard deviation of TRE are reported.
	The gray-colored cell denotes the \LBFGS~method that achieve either the smallest TRE (higher accuracy) or lowest run-time (faster convergence).
	}
	\setlength{\tabcolsep}{2.5pt}
	\centering
	\begin{minipage}{0.535\columnwidth}
		\pgfplotstabletypeset[
		mystyle,
		columns={optMethods,iter,feval,redt,time,tre},
		every head row/.style={%
			before row={\toprule 
				& \multicolumn{5}{c}{TP-1: Hand, SSD, Curvature}\\
				\cmidrule{3-6}},
			after row={
				& & & & \scriptsize (sec.) & \scriptsize mean (std.)\\
				\midrule},
		},
		every row 4 column 4/.style={markstyle},
		every row 7 column 5/.style={markstyle},
		]{data/TP_Hands_MLIR_SSD_Curv_maxLevel7_alpha1.5e+03_m5_mf1.csv}		
	\end{minipage}
	\begin{minipage}{0.455\columnwidth}
		\pgfplotstabletypeset[
		mystyle,
		columns={iter,feval,redt,time,tre},
		every head row/.style={%
			before row={\toprule 
				\multicolumn{5}{c}{TP-2: Hand, MI, Elastic}\\
				\cmidrule{1-5}},
			after row={
				& & & \scriptsize (sec.) & \scriptsize mean (std)\\
				\midrule},
		},
	    every row 2 column 3/.style={markstyle},
	    every row 3 column 4/.style={markstyle},
	    every row 8 column 4/.style={postproc cell content/.style={@cell content={}}},
	    every row 8 column 3/.style={postproc cell content/.style={@cell content={$\gg 360$}}},
		]{data/TP_Hands_MLIR_MI_Elastic_maxLevel7_alpha5.0e-03_m5_mf1.csv}		
	\end{minipage}
    	\begin{minipage}{0.535\columnwidth}
    	\pgfplotstabletypeset[
    	mystyle,
    	columns={optMethods,iter,feval,redt,time,tre},
    	every head row/.style={%
    		output empty row,
    		before row={
    			& \multicolumn{5}{c}{TP-3: Lung, NGF, Curvature}\\
    			\cmidrule{2-6}},
    	},
    	every row 7 column 4/.style={ markstyle},
    	every row 3 column 5/.style={ markstyle},
    	]{data/TP_DIR4DCT_MLIR_NGF_Curv_Case1_maxLevel5_alpha1.0e+02_m5_mf1.csv}		
    \end{minipage}
    \begin{minipage}{0.455\columnwidth}
    	\pgfplotstabletypeset[
    	mystyle,
    	columns/blank/.style={column name= {temp}, column type=R{1.7cm}, string type},
    	create on use/blank/.style={
    		create col/assign/.code={%
    			\edef\entry{{}}%
    			\pgfkeyslet{/pgfplots/table/create col/next content}\entry
    		}
    	},
    	columns={iter,feval,redt,time,blank},
    	every head row/.style={%
    		output empty row,
    		before row={
    			\multicolumn{5}{c}{TP-4: Disc-C, SSD, Hyperelastic}\\
    			\cmidrule{1-5}},
    	},
    	every row 6 column 3/.style={ markstyle},
    	]{data/TP_disc2C_MLIR_SSD_Hyper_maxLevel4_alpha1.0e+00_m5_mf1.csv}	
    \end{minipage}
	\label{tab:IR_Results}
\end{table}

Now, we show effectiveness on four real-life large-scale problems from image registration. The registration problems are generally highly non-convex and ill-posed in nature; see \cite{2009-FAIR} for details. Here, given a pair of images $T$ and $R$, the goal is to find a transformation field $\phi$ such that the transformed image $T(\phi)$ is similar to $R$, \ie, $T(\phi) \approx R$. To determine $\phi$, we solve an unconstrained optimization problem 
\[
J(\phi) = D(T(\phi), R) + \alpha S(\phi) \xrightarrow{\phi} \min
\]
where $D$ measures the similarity between the transformed image $T(\phi)$ and $R$. The regularizer $S$ enforces smoothness in the field. Curvature, elastic, and hyperelastic are a few commonly used regularizers.  The typical choices for similarity measures are the sum of squared difference (SSD), normalized gradient fields (NGF), and mutual information (MI). 

Our four test problems (TP) represent a big class of registration models; see Tab.~\ref{tab:IRproblems}. The popular X-ray hand images are from \cite{2009-FAIR}, lung CT images from the well-known DIR dataset \cite{Knig2018,Castillo2009}, and the academic Disc-C images from \cite{Burger2013}.

Note that our strategy works even when the Hessian of regularizer is available only partially. For that, we consider hyperelastic regularizer; see \cite{Burger2013} for details.

The ground truth transformation fields are not available for real-world problems. Hence, we compute the target registration error (TRE), defined as the Euclidean distance between the ground truth landmarks and the estimated landmarks after registration. To accumulate the TRE for each landmark position, we compute the mean and standard deviation (std.) of TRE. The regularization parameter is set to achieve the lowest TRE without foldings in the field.

The field $\phi$ is initialized with an identity map, \ie, $\phi_0(x) = x$ in all experiments. The open-source FAIR image registration toolbox \cite{2009-FAIR} is the backbone of our implementations. We follow FAIR matrix-free approach.

In all the experiments, the regularization-equipped initialization schemes achieve higher accuracy than the simple scaling based approaches, \ie, $\LSp$, $\LSy$, and $\Id$. Moreover, these simple scaling schemes converge to a higher value of the objective function; see TRE and reduction factor column in Tab.~\ref{tab:IR_Results}. 

In terms of TRE, the $\FAIR$ scheme is almost similar to the proposed schemes, but it converges much slower than others; see the run-time column in Tab.~\ref{tab:IR_Results}. The proposed schemes are faster than others in all the experiments but TP-2. Here, $\LSy$ converges faster but at the cost of lower accuracy. It is important to note that, even though the regularization-equipped schemes' per-iteration cost is higher due to the linear solver, they converge faster. It is mainly because of the lower iteration counts, as also seen for quadratic problems; see Tab.~\ref{tab:quadPrb}.

Among the four proposed choices, the $\Dp$ and the $\GM$ turn out to be the best performing schemes. Although, we notice that the performance of a particular scheme greatly depends on the minimizing objective function at hand.

As expected, the steepest descent method is one of the slowest and inaccurate among all. The GN method generally needs fewer iterations, but the per-iteration cost is much higher due to the Jacobian computation; hence the run-time is high.

\end{subsection}
 


\end{section}

\begin{section}{Conclusion}\label{Sec:conclusion}
	
We have proposed a Hessian initialization strategy particularly suited for large-scale non-linear inverse problems. 
Typically, the objective function is the sum of a data-fidelity term and a regularizer.
Often, the Hessian of the data-fidelity is computationally expensive. But not the Hessian of the regularizer.

We propose to replace the Hessian of the data-fidelity with a scalar and keep the Hessian of regularizer to initialize the Hessian approximation at every iteration. The scalar satisfies the well-known secant equation in the sense of ordinary and total least squares, and geometric mean regression. In total, we have proposed four choices for the scalar that leads to well-scaled search directions. We also established the inter-relationship between the derived scalars and discussed the consequences of a scalar choice on the convergence in terms of iteration counts and line-search steps. The implementation of our strategy requires only a small change of a standard \LBFGS~code.

Our experiments on highly non-convex image registration problems indicate that the proposed schemes converge faster and achieve higher accuracy than the simple scaling based approaches. The $\Dp$, based on ordinary least squares, and $\GM$, based on geometric mean regression, are best-performing schemes.

Under suitable assumptions, we can also show that the proposed parameters are the eigenvalue's estimates of the Hessian of a data-fidelity term. The theoretical investigation will be a part of the extended version of this paper. Future work also addresses the application to inverse problems, \eg, ultrasound tomography \cite{Bernhardt2020}, and optical tomography \cite{Saratoon2013}.

\end{section}


\bibliographystyle{splncs04}
\bibliography{refs}

\end{document}